\begin{document}

\newtheorem{theorem}{Theorem}[section]
\newtheorem{lemma}[theorem]{Lemma}
\newtheorem{sublemma}[theorem]{Sublemma}
\newtheorem{proposition}[theorem]{Proposition}
\newtheorem{corollary}[theorem]{Corollary}
\newtheorem{conjecture}[theorem]{Conjecture}
\newtheorem{question}[theorem]{Question}
\newtheorem{problem}[theorem]{Problem}
\newtheorem*{claim}{Claim}
\newtheorem*{criterion}{Criterion}
\newtheorem*{simple_thm}{Theorem A}
\newtheorem*{ennd_thm}{Theorem B}

\theoremstyle{definition}
\newtheorem{definition}[theorem]{Definition}
\newtheorem{construction}[theorem]{Construction}
\newtheorem{notation}[theorem]{Notation}

\theoremstyle{remark}
\newtheorem{remark}[theorem]{Remark}
\newtheorem{example}[theorem]{Example}

\numberwithin{equation}{subsection}

\def\Z{\mathbb Z}
\def\N{\mathbb N}
\def\R{\mathbb R}
\def\Q{\mathbb Q}
\def\D{\mathcal D}
\def\E{\mathcal E}
\def\RR{\mathcal R}
\def\P{\mathcal P}
\def\F{\mathcal F}

\def\cl{\textnormal{cl}}
\def\scl{\textnormal{scl}}
\def\Aut{\textnormal{Aut}}
\def\homeo{\textnormal{Homeo}}
\def\rot{\textnormal{rot}}
\def\area{\textnormal{area}}
\def\asdim{\textnormal{as dim}}
\def\supp{\textnormal{supp}}

\def\Id{\textnormal{Id}}
\def\id{\textnormal{id}}
\def\SL{\textnormal{SL}}
\def\PSL{\textnormal{PSL}}
\def\length{\textnormal{length}}
\def\fill{\textnormal{fill}}
\def\rank{\textnormal{rank}}
\def\til{\widetilde}

\title{Large scale geometry of commutator subgroups}
\author{Danny Calegari}
\address{Department of Mathematics \\ Caltech \\
Pasadena CA, 91125}
\email{dannyc@its.caltech.edu}
\author{Dongping Zhuang}
\address{Department of Mathematics \\ Caltech \\
Pasadena CA, 91125}
\email{dongping@its.caltech.edu}

\date{9/30/2008, Version 0.05}

\begin{abstract}
Let $G$ be a finitely presented group, and $G'$ its commutator subgroup. Let $C$ be the Cayley
graph of $G'$ with {\em all commutators in $G$} as generators. Then $C$ is
large scale simply connected. Furthermore, if $G$ is a torsion-free nonelementary
word-hyperbolic group, $C$ is one-ended. Hence (in this case), the asymptotic dimension of
$C$ is at least $2$.
\end{abstract}

\maketitle

\section{Introduction}

Let $G$ be a group and let $G':=[G,G]$ denote the commutator subgroup of $G$.
The group $G'$ has a canonical generating set $S$, which consists precisely
of the set of commutators of pairs of elements in $G$.
In other words,
$$S = \lbrace [g,h] \text{ such that } g,h \in G \rbrace$$
Let $C_S(G')$ denote the Cayley graph of $G'$ with respect to the generating set $S$.
This graph can be given the structure of a (path) metric space in the usual way,
where edges have length $1$ by fiat.

By now it is standard to expect that the large scale geometry of a Cayley graph will
reveal useful information about a group. However, one usually studies finitely generated
groups $G$ and the geometry of a Cayley graph $C_T(G)$ associated to a finite generating set $T$.
For typical infinite groups $G$, the set of commutators $S$ will be infinite, and the
Cayley graph $C_S(G')$ will not be locally compact. This is a significant complication.
Nevertheless, $C_S(G')$ has several distinctive properties which invite careful study:

\begin{enumerate}
\item{The set of commutators of a group is {\em characteristic} (i.e. invariant under any
automorphism of $G$), and therefore 
the semi-direct product $G' \rtimes \Aut(G)$ acts on $C_S(G')$ by isometries}
\item{The metric on $G'$ inherited as a subspace of $C_S(G')$ is both left- and
right-invariant (unlike the typical Cayley graph, whose metric is merely left-invariant)}
\item{Bounded cohomology in $G$ is reflected in the geometry of $G'$; for instance, 
the translation length $\tau(g)$ of an element $g \in G'$ is the stable commutator
length $\scl(g)$ of $g$ in $G$}
\item{Simplicial loops in $C_S(G')$ through the origin 
correspond to (marked) homotopy classes of maps of closed surfaces to a $K(G,1)$}
\end{enumerate}

These properties are straightforward to establish; for details, see \S~\ref{definition_section}. 

This paper concerns the connectivity of $C_S(G')$ in the large for various groups $G$. 
Recall that a {\em thickening} $Y$ of a
metric space $X$ is an isometric inclusion $X \to Y$ into a bigger metric space, such that
the Hausdorff distance in $Y$ between $X$ and $Y$ is finite. A metric space $X$
is said to be {\em large scale $k$-connected} if for any thickening $Y$ of $X$ there is another
thickening $Z$ of $Y$ which is $k$-connected (i.e. $\pi_i(Z) = 0$ for $i \le k$; also see
the Definitions in \S~\ref{simple_section}). Our first
main theorem, proved in \S~\ref{simple_section}, 
concerns the large scale connectivity of $C_S(G')$ where $G$ is finitely presented:

\begin{simple_thm}
Let $G$ be a finitely presented group. Then $C_S(G')$ is large scale simply connected.
\end{simple_thm}

As well as large scale connectivity, one can study connectivity {\em at infinity}.
In \S~\ref{hyperbolic_group_section} we specialize to word-hyperbolic $G$ and
prove our second main theorem, concerning the connectivity of $G'$ at infinity:

\begin{ennd_thm}
Let $G$ be a torsion-free nonelementary word-hyperbolic group. Then $C_S(G')$ is one-ended; 
i.e. for any $r>0$ there is an $R\ge r$ such that any two points in $C_S(G')$ at distance
at least $R$ from $\id$ can be joined by a path which does not come closer than distance $r$
to $\id$.
\end{ennd_thm}

Combined with a theorem of Fujiwara-Whyte \cite{Fujiwara_Whyte}, Theorem~A and Theorem~B
together imply that for $G$ a torsion-free nonelementary word-hyperbolic group, $C_S(G')$ 
has asymptotic dimension at least $2$ 
(see \S~\ref{asymptotic_section} for the definition of asymptotic dimension).

\section{Definitions and basic properties}\label{definition_section}

Throughout the rest of this paper, $G$ will denote a group, $G'$ will denote its commutator subgroup, and
$S$ will denote the set of (nonzero) commutators in $G$, thought of as a generating set for $G'$.
Let $C_S(G')$ denote the Cayley graph of $G'$ with respect to the generating set $S$. As a graph,
$C_S(G')$ has one vertex for every element of $G'$, and two elements $g,h \in G'$ are joined by an
edge if and only if $g^{-1}h \in S$. Let $d$ denote distance in $C_S(G')$ restricted to $G'$.

\begin{definition}
Let $g \in G'$. The {\em commutator length} of $g$, denoted $\cl(g)$, is the smallest number of
commutators in $G$ whose product is equal to $g$.
\end{definition}

From the definition, it follows that $\cl(g) = d(\id,g)$ and $d(g,h) = \cl(g^{-1}h)$ for
$g,h \in G'$.

\begin{lemma}
The group $G' \rtimes \Aut(G)$ acts on $C_S(G')$ by isometries.
\end{lemma}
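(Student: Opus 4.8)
The plan is to exhibit the two factors acting separately by isometries of $C_S(G')$ and then check that these actions are compatible with the multiplication in the semidirect product.

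First I would treat $G'$. It acts on $C_S(G')$ by left translation: $g \cdot x = gx$ for $g \in G'$ and $x$ a vertex (i.e.\ an element of $G'$). Since $(gx)^{-1}(gy) = x^{-1}y$, left translation carries edges to edges, hence is a graph automorphism and in particular an isometry of the induced path metric. This is just the standard fact that a group acts on its own Cayley graph by left translation.

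Next I would treat $\Aut(G)$. The one thing to verify — and essentially the only substantive point in the whole proof — is that the generating set $S$ is \emph{characteristic}: for any $\phi \in \Aut(G)$ and $g,h \in G$ one has $\phi([g,h]) = [\phi(g),\phi(h)]$, and since $\phi$ is a bijection of $G$ it restricts to a bijection of $S$ (and of the non-identity commutators). Consequently $\phi$ preserves $G' = \langle S\rangle$, so it induces a map on vertices $x \mapsto \phi(x)$; and if $x^{-1}y \in S$ then $\phi(x)^{-1}\phi(y) = \phi(x^{-1}y) \in S$, so $\phi$ sends edges to edges. Hence $\Aut(G)$ acts on $C_S(G')$ by graph automorphisms, in particular by isometries (each such isometry fixing the vertex $\id$).

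Finally I would combine the two actions. Define $(g,\phi)\cdot x := g\,\phi(x)$ for $(g,\phi) \in G' \rtimes \Aut(G)$. Using the semidirect-product multiplication $(g,\phi)(h,\psi) = (g\,\phi(h),\,\phi\psi)$, a direct computation gives $(g,\phi)\cdot\big((h,\psi)\cdot x\big) = g\,\phi\big(h\,\psi(x)\big) = g\,\phi(h)\,\phi(\psi(x)) = \big((g,\phi)(h,\psi)\big)\cdot x$, so this is a genuine left action. Since $(g,\phi) = (g,\id)(\id,\phi)$ and each factor acts by an isometry by the two previous steps, so does the composite, and therefore $G' \rtimes \Aut(G)$ acts by isometries. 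I do not anticipate any real obstacle: the characteristic property of $S$ is immediate, and the remaining verifications are routine checks that the semidirect-product structure is precisely what is needed to interleave left translation with the automorphism action.
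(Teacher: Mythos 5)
Your proposal is correct and follows essentially the same route as the paper: $G'$ acts by left translation, $\Aut(G)$ acts because the set $S$ of commutators is characteristic (hence permuted by automorphisms), and the two actions assemble into an action of the semidirect product. You simply spell out the compatibility check $(g,\phi)\cdot x = g\,\phi(x)$ that the paper leaves implicit.
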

\begin{proof}
$\Aut(G)$ acts as permutations of $S$, and therefore the natural action on $G$ extends
to $C_S(G')$. Further, $G'$ acts on $C_S(G')$ by left multiplication.
\end{proof}

\begin{lemma}\label{left_right_invariant}
The metric on $C_S(G')$ restricted to $G'$ is left- and right-invariant.
\end{lemma}
\begin{proof}
Since the inverse of a commutator is a commutator,
we have $\cl(g^{-1}h) = \cl(h^{-1}g)$. Since the conjugate of a commutator by any element is a commutator,
we have $\cl(h^{-1}g) = \cl(gh^{-1})$. This completes the proof.
\end{proof}

\begin{definition}
Given a metric space $X$ and an isometry $h$ of $X$, the {\em translation length}
of $h$ on $X$, denoted $\tau(h)$, is defined by the formula
$$\tau(h) = \lim_{n \to \infty} \frac {d(p,h^n(p))} n$$
where $p \in X$ is arbitrary. 
\end{definition}

By the triangle inequality, the limit does not depend on the choice of $p$.

For $g \in G'$ acting on $C_S(G')$ by left multiplication, we can take $p = \id$. Then
$d(\id,g^n(\id)) = \cl(g^n)$.

\begin{definition}
Let $G$ be a group, and $g \in G'$. The {\em stable commutator length} of $g$ is the limit
$$\scl(g) = \lim_{n \to \infty} \frac {\cl(g^n)} n$$
\end{definition}

Hence we have the following:
\begin{lemma}
Let $g \in G'$ act on $C_S(G')$ by left multiplication. There is an equality $\tau(g) = \scl(g)$.
\end{lemma}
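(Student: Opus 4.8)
The plan is to observe that this lemma is an immediate consequence of the identifications already recorded in this section, so the only real work is to line up the two limits and check that everything is well-defined. First I would invoke the earlier lemma stating that $G' \rtimes \Aut(G)$ acts on $C_S(G')$ by isometries; in particular left multiplication by $g \in G'$ is an isometry, so $\tau(g)$ is defined, and by the remark following the definition of translation length its value is independent of the choice of base point $p \in C_S(G')$. Hence I may choose $p = \id$.

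Next I would compute: for this choice, $g^n$ acts on $\id$ by left multiplication to give the vertex $g^n \in G'$ (note $g^n \in G'$ since $G'$ is a subgroup, so the quantity $\cl(g^n)$ and the limit defining $\scl(g)$ both make sense). The distance between the vertices $\id$ and $g^n$ in the graph $C_S(G')$ is exactly $d(\id, g^n)$, and by the identity recorded right after the definition of commutator length this equals $\cl(g^n)$. Substituting into the defining formula for $\tau$ gives
$$\tau(g) = \lim_{n \to \infty} \frac{d(\id, g^n(\id))}{n} = \lim_{n \to \infty} \frac{\cl(g^n)}{n} = \scl(g),$$
which is precisely the definition of stable commutator length.

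The one point that deserves a sentence of justification is the existence of the limit: the function $n \mapsto \cl(g^n)$ is subadditive, since concatenating an expression of $g^m$ as a product of $\cl(g^m)$ commutators with one of $g^n$ as a product of $\cl(g^n)$ commutators expresses $g^{m+n}$ as a product of $\cl(g^m) + \cl(g^n)$ commutators; Fekete's lemma then guarantees that $\cl(g^n)/n$ converges (to its infimum), which simultaneously re-confirms that $\tau(g)$ is well-defined. There is no genuine obstacle here — the content of the lemma is entirely in the bookkeeping already set up in this section, namely that the subspace metric on $G'$ is the commutator-length metric and that left multiplication is an isometry — so I would keep the proof to essentially these few lines.
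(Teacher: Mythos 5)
Your proposal is correct and follows essentially the same route as the paper, which simply declares the lemma ``immediate from the definitions'' after recording that $d(\id,g^n(\id))=\cl(g^n)$ for the base point $p=\id$. Your extra remark on subadditivity and Fekete's lemma is a welcome justification of the limit's existence, but it does not change the argument in any substantive way.
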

\begin{proof}
This is immediate from the definitions.
\end{proof}

Stable commutator length is related to two-dimensional (bounded) cohomology. For
an introduction to stable commutator length, see \cite{Calegari_scl}; for an
introduction to bounded cohomology, see \cite{Gromov_bounded}.

If $X$ is a metric space, and $g$ is an isometry of $X$, one can obtain lower bounds
on $\tau(g)$ by constructing a Lipschitz function on $X$ which grows linearly on the
orbit of a point under powers of $g$. One important class of Lipschitz functions
on $C_S(G')$ are {\em quasimorphisms}:

\begin{definition}
Let $G$ be a group. A function $\phi:G \to \R$ is a {\em quasimorphism} if there is a
least positive real number $D(\phi)$ called the {\em defect}, such that for all
$g, h \in G$ there is an inequality
$$|\phi(g) + \phi(h) - \phi(gh)| \le D(\phi)$$
\end{definition}

From the defining property of a quasimorphism, $|\phi(\id)| \le D(\phi)$ and
therefore by repeated application of the triangle inequality, one can estimate
$$|\phi(f[g,h]) - \phi(f)| \le 7D(\phi)$$ 
for any $f,g,h \in G$. In other words,
\begin{lemma}\label{7_lipschitz}
Let $G$ be a group, and let $\phi:G \to \R$ be a quasimorphism with defect
$D(\phi)$. Then $\phi$ restricted to $G'$ is $7D(\phi)$-Lipschitz in the metric
inherited from $C_S(G')$.
\end{lemma}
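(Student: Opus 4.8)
The plan is to reduce the Lipschitz bound to a statement about single edges of $C_S(G')$ and then invoke the per-edge estimate recorded just before the lemma. Since $C_S(G')$ is a graph all of whose edges have length $1$, for any $a,b \in G'$ the distance $d(a,b)$ is the minimal number of edges in an edge-path from $a$ to $b$; fix such a geodesic $a = v_0, v_1, \dots, v_n = b$ with $n = d(a,b)$, all $v_i \in G'$. Consecutive vertices satisfy $v_{i-1}^{-1}v_i \in S$, say $v_i = v_{i-1}[g_i,h_i]$ for suitable $g_i, h_i \in G$. Applying the inequality $|\phi(f[g,h]) - \phi(f)| \le 7D(\phi)$ with $f = v_{i-1}$ gives $|\phi(v_{i-1}) - \phi(v_i)| \le 7D(\phi)$ for each $i$, and summing along the path together with the triangle inequality in $\R$ yields $|\phi(a) - \phi(b)| \le \sum_{i=1}^n |\phi(v_{i-1}) - \phi(v_i)| \le 7D(\phi)\, d(a,b)$, which is the assertion.

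It then remains only to justify the per-edge estimate $|\phi(f[g,h]) - \phi(f)| \le 7D(\phi)$ itself. Here one uses three elementary consequences of the defining inequality with constant $D := D(\phi)$: near-additivity $|\phi(xy) - \phi(x) - \phi(y)| \le D$; the bound $|\phi(\id)| \le D$ (take $x = y = \id$); and near-antisymmetry $|\phi(x) + \phi(x^{-1})| \le 2D$ (take $y = x^{-1}$ and combine with the previous bound). Writing $[g,h] = ghg^{-1}h^{-1}$ and grouping the factors, one applies near-additivity a bounded number of times to peel off $\phi(f)$, then a conjugation estimate $|\phi(ghg^{-1}) - \phi(h)| \le 2D$ (itself a two-line consequence of near-additivity) together with near-antisymmetry to cancel the remaining letters in pairs; collecting the accumulated error terms gives the constant $7D$. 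A slightly more careful grouping yields a smaller constant, but $7D$ is ample for the later applications.

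I expect no genuine obstacle here: the geometric content is simply that a geodesic in $C_S(G')$ advances by exactly one commutator per step, so a function that is coarsely additive and coarsely conjugation-invariant cannot move by more than a fixed amount along each edge. The only point demanding attention is the bookkeeping of constants in the defect estimate, where the terms $\phi(x) + \phi(x^{-1})$ fail to vanish and instead contribute $O(D)$; this is what prevents the constant from being as small as $2D$ or $3D$, and it is the sole place where one must be slightly careful rather than routine.
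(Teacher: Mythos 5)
Your proposal is correct and follows essentially the same route as the paper, which likewise deduces the Lipschitz bound from the per-edge estimate $|\phi(f[g,h]) - \phi(f)| \le 7D(\phi)$ obtained by repeated application of the quasimorphism inequality (together with $|\phi(\id)| \le D(\phi)$ and the resulting bound $|\phi(x)+\phi(x^{-1})| \le 2D(\phi)$), then summing along a geodesic edge-path in $C_S(G')$. Your bookkeeping via the conjugation estimate $|\phi(ghg^{-1})-\phi(h)| \le 2D(\phi)$ in fact yields a constant no worse than $7D(\phi)$, so the argument is complete.
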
 

Word-hyperbolic groups admit a rich family of quasimorphisms. We will exploit this
fact in \S~\ref{hyperbolic_group_section}.

\section{Large scale simple connectivity}\label{simple_section}

The following definitions are taken from \cite{Gromov_asymptotic}, pp.~23--24.
\begin{definition}
A {\em thickening} $Y$ of a metric space $X$ is an isometric inclusion $X \to Y$
with the property that there is a constant $C$ so that every point in $Y$ is within
distance $C$ of some point in $X$.
\end{definition}

\begin{definition}
A metric space $X$ is {\em large scale $k$-connected} if for every thickening
$X \subset Y$ there is a thickening $Y \subset Z$ which is $k$-connected in the
usual sense; i.e. $Z$ is path-connected, and $\pi_i(Z) = 0$ for $i \le k$.
\end{definition}

For $G$ a finitely generated group with generating set $T$, Gromov outlines a proof
(\cite{Gromov_asymptotic}, 1.$C_2$) that the Cayley
graph $C_T(G)$ is large scale $1$-connected if and only if $G$ is finitely presented,
and $C_T(G)$ is large scale $k$-connected if and only if there exists a proper
simplicial action of $G$ on a $(k+1)$-dimensional $k$-connected simplicial complex
$X$ with compact quotient $X/G$.

For $T$ an infinite generating set, large scale simple connectivity is equivalent to
the assertion that $G$ admits a presentation $G = \langle T \; | \; R \rangle$ where
all elements in $R$ have {\em uniformly bounded length} as words in $T$; i.e.
all relations in $G$ are consequences of relations of bounded length.

To show that $C_S(G')$ is large scale $1$-connected, it suffices to show that there
is a constant $K$ so that for
every simplicial loop $\gamma$ in $C_S(G')$ there are a sequence of loops
$\gamma = \gamma_0,\gamma_1,\cdots,\gamma_n$ where $\gamma_n$ is the trivial loop,
and each $\gamma_i$ is obtained from
$\gamma_{i-1}$ by cutting out a subpath $\sigma_{i-1} \subset \gamma_{i-1}$ 
and replacing it by a subpath
$\sigma_i \subset \gamma_i$ 
with the same endpoints, so that $|\sigma_{i-1}| + |\sigma_i| \le K$.

More generally, we call the operation of cutting out a subpath $\sigma$ and replacing
it by a subpath $\sigma'$ with the same endpoints where $|\sigma| + |\sigma'| \le K$ a
{\em $K$-move}.

\begin{definition}
Two loops $\gamma$ and $\gamma'$ are {\em $K$-equivalent} if there is a finite sequence
of $K$-moves which begins at $\gamma$, and ends at $\gamma'$.
\end{definition}

$K$-equivalence is (as the name suggests) an equivalence relation.
The statement that $C_S(G')$ is large scale $1$-connected is equivalent to the statement
that there is a constant $K$ such that every two loops in $C_S(G')$ are $K$-equivalent.

First we establish large scale simple connectivity in the case of a free group.

\begin{lemma}\label{free_group_case}
Let $F$ be a finitely generated free group. Then $C_S(F')$ is large scale
simply connected.
\end{lemma}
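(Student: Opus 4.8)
The plan is to show that every simplicial loop in $C_S(F')$ is $K$-equivalent to the trivial loop, for some universal constant $K$ depending only on the rank of $F$. A loop through the origin corresponds to a word $w = c_1 c_2 \cdots c_m$ in the generators of $F'$ (each $c_i$ a commutator in $F$) with $w = \id$ in $F$; an arbitrary loop is a translate of such a based loop, and since left multiplication is an isometry it suffices to treat based loops. Thus we must show that the relation $c_1 \cdots c_m = \id$ in $F$ is a consequence, in the generating set $S$, of relations of uniformly bounded $S$-length. The key point is that $F$ is free, so there is no mystery about \emph{why} the product is trivial: the word $c_1 \cdots c_m$, written out in the free generators of $F$, reduces to the empty word by a sequence of free cancellations, and I want to convert each free cancellation into a bounded sequence of $K$-moves in $C_S(F')$.

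The key step is a normal-form / rewriting lemma: any product of two commutators $[a,b][c,d]$ in $F$ can, after an $S$-move of bounded size, be re-expressed so as to expose cancellation between adjacent commutators, and more generally there is a bounded-size relation set $R \subset S^*$ (relations among commutators of length at most some $K_0$) such that the following holds. Given the $S$-word $c_1 \cdots c_m$ representing $\id$, pick a pair of letters in the fully-expanded free word that cancel; using identities like $[a,b] = [a', b']^{g}$ for suitable conjugates, $[a,bc] = [a,b]\,{}^{b}[a,c]$, $[ab,c] = {}^{a}[b,c]\,[a,c]$, and $x[a,b]x^{-1} = [xax^{-1}, xbx^{-1}]$ — each of which is a relation of bounded $S$-length once we observe that a conjugate of a commutator is again a single generator in $S$ — I can "transport" the cancelling pair until the two cancelling free letters sit inside a single commutator, or inside an adjacent pair of commutators whose product visibly simplifies, and then apply the bounded relation to remove them. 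Iterating reduces $m$, and since each reduction is accomplished by a bounded number of $K$-moves for a fixed $K$, after finitely many steps we reach the empty word.

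The main obstacle I expect is \emph{controlling the length of the intermediate commutators}: when I rewrite $[a, bc]$ as $[a,b]\,{}^{b}[a,c]$, the entries $b$, $c$, and the conjugating element can be long elements of $F$, so although the number of $S$-letters stays bounded, I need to make sure the operation is genuinely a $K$-move, i.e. that it only modifies a bounded-length \emph{subpath} of the loop in $C_S(F')$ and replaces it with another bounded-length subpath — the length here is counted in $S$, not in the free generators, so long entries are harmless as long as each commutator counts as one edge. The real care is needed to ensure the rewriting terminates: I will set up a complexity measure, e.g. the length of the reduced free word $\overline{c_1 \cdots c_m}$ together with $m$ ordered lexicographically (or, following Culler's interpretation of commutator length via surfaces, the genus/complexity of an associated surface), and check that each block of $K$-moves strictly decreases it. An alternative, cleaner route — which I would pursue in parallel — is to use the surface interpretation from property (4) in the introduction directly: a based loop is a map of a closed surface to a wedge of circles, a $K$-move is a bounded modification of this surface, and large scale simple connectivity amounts to showing any two such surfaces are related by boundedly-many elementary surface moves (compressions, handle slides, stabilizations), which is a compactness statement about the mapping class group action that may be easier to make uniform than the combinatorial rewriting.
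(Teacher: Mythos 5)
There is a genuine gap in your primary (rewriting) route, and it sits exactly where you flagged it: termination and progress. The complexity measure you propose, the length of the reduced free word $\overline{c_1\cdots c_m}$, is identically zero, since the loop condition means $c_1\cdots c_m=\id$ in $F$; so it measures nothing. The identities you intend to use, such as $[a,bc]=[a,b]\,{}^{b}[a,c]$ and conjugation moves, increase both the number of $S$-letters and the total free length, so neither of your other candidate quantities visibly decreases either. More fundamentally, deleting a cancelling pair of free letters does not leave a product of commutators: restoring the commutator structure after a cancellation is the whole difficulty, and it is global rather than local. The pattern of cancellations in a trivial product of $m$ commutators is exactly the data of a genus-$m$ surface mapping to the wedge of circles, and ``transporting a cancelling pair by bounded moves until it dies'' is a disguised genus-reduction claim that your sketch asserts but does not prove. (A rewriting-system proof is in fact believed to exist --- the paper's remark cites Bartholdi and Groves --- but it requires a confluent Noetherian system with rules of bounded length, i.e. precisely the normal-form and termination analysis that is absent here.)

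Your ``alternative, cleaner route'' is essentially the paper's actual proof, but the two inputs that make it work are the ones you leave unspecified. First, one needs the Sublemma that precomposing the map $f:\Sigma\to X$ by any basepoint-preserving mapping class changes the associated loop only by uniformly bounded moves; this is not a compactness statement about the mapping class group action, but follows from Dehn's theorem that the mapping class group is generated by twists in a standard finite collection of curves, each meeting at most two of the marking curves $\alpha_i,\beta_i$, so that each conjugated twist generator alters at most two consecutive commutators (giving $K=4$). Second, one needs Stallings' observation that a map from a closed surface to a wedge of circles factors through a pinch: the preimage of a regular value contains an embedded essential loop $\delta$ with $f(\delta)$ nullhomotopic, and after applying an automorphism (controlled by the Sublemma) $\delta$ becomes either $\alpha_1$, killing one commutator, or a standard separating curve, splitting the loop at $\id$ into two strictly shorter loops; induction on length then finishes. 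Without these two ingredients, neither of your routes closes.
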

\begin{proof}
Let $\gamma$ be a loop in $C_S(F')$. After acting on $\gamma$ by left translation, we
may assume that $\gamma$ passes through $\id$, so we may think of $\gamma$ as a simplicial
path in $C_S(F')$ which starts and ends at $\id$. If $s_i \in S$ corresponds to the
$i$th segment of $\gamma$, we obtain an expression
$$s_1 s_2 \cdots s_n = \id$$
in $F$, where each $s_i$ is a commutator. For each $i$, let $a_i,b_i \in F$ be elements
with $[a_i,b_i] = s_i$ (note that $a_i,b_i$ with this property are not necessarily unique).
Let $\Sigma$ be a surface of genus $n$, and let $\alpha_i,\beta_i$ for $i \le n$ be a
standard basis for $\pi_1(\Sigma)$; see Figure~\ref{standard_marking}. 

\begin{figure}[htpb]
\labellist
\small\hair 2pt
\endlabellist
\centering
\includegraphics[scale=0.4]{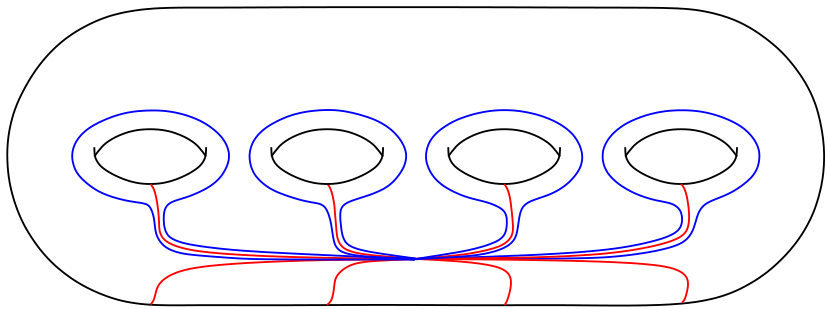}
\caption{A standard basis for $\pi_1(\Sigma)$ where $\Sigma$ has genus $4$.
The $\alpha_i$ curves are in red, and the $\beta_i$ curves are in blue.} \label{standard_marking}
\end{figure}

Let $X$ be a wedge of circles corresponding to
free generators for $F$, so that $\pi_1(X) = F$. We can construct a basepoint preserving
map $f:\Sigma \to X$ with $f_*(\alpha_i) = a_i$ and $f_*(\beta_i) = b_i$ for each $i$.
Since $X$ is a $K(F,1)$, the homotopy class of $f$ is uniquely determined by the
$a_i,b_i$. Informally, we could say that loops in $C_S(F')$ correspond to based homotopy classes
of maps of marked oriented surfaces into $X$ (up to the ambiguity indicated above).

Let $\phi$ be a (basepoint preserving) self-homeomorphism of $\Sigma$. The map 
$f \circ \phi:\Sigma \to X$ determines a new loop in $C_S(F')$ (also passing through
$\id$) which we denote $\phi_*(\gamma)$ (despite the notation, this image does not depend
only on $\gamma$, but on the choice of elements $a_i,b_i$ as above).

\begin{sublemma}\label{automorphism_bounded}
There is a universal constant $K$ independent of $\gamma$ or of $\phi$ (or even of $F$)
so that after composing $\phi$ by an inner automorphism of $\pi_1(\Sigma)$ if necessary,
$\gamma$ and $\phi_*(\gamma)$ as above are $K$-equivalent.
\end{sublemma}
\begin{proof}
Suppose we can express $\phi$ as a product of (basepoint preserving) automorphisms 
$$\phi = \phi_m \circ \phi_{m-1} \circ \cdots \circ \phi_1$$ 
such that if $\alpha_i^j,\beta_i^j$ denote the images of $\alpha_i,\beta_i$
under $\phi_j \circ \phi_{j-1} \circ \cdots \circ \phi_1$, then $\phi_{j+1}$ fixes
all but $K$ consecutive pairs $\alpha_i^j,\beta_i^j$ up to (basepoint preserving) homotopy. Let
$s_i^j = [f_*\alpha_i^j,f_*\beta_i^j]$, and let $\gamma^j$ be the loop in $C_S(F')$
corresponding to the identity $s_1^js_2^j \cdots s_n^j = \id$ in $F$.

For each $j$, let $\supp_{j+1}$ denote the {\em support} of $\phi_{j+1}$; i.e. the set of
indices $i$ such that $\phi_{j+1}(\alpha_i^j) \ne \alpha_i^j$ or $\phi_{j+1}(\beta_i^j) \ne \beta_i^j$.
By hypothesis, $\supp_{j+1}$ consists of at most $K$ indices for each $j$.

Because it is just the marking on $\Sigma$ which has been changed and not the map $f$,
if $k \le i \le k+K-1$ is a maximal consecutive string of indices in $\supp_{j+1}$, 
then there is an equality of products
$$s_k^j s_{k+1}^j \cdots s_{k+K-1}^j = s_k^{j+1} s_{k+1}^{j+1} \cdots s_{k+K-1}^{j+1}$$
as elements of $F$. This can be seen geometrically as follows. The expression on the
left is the image under $f_*$ of an element represented by a certain embedded based loop
in $\Sigma$, while the expression on the right is its image under $f_* \circ \phi_{j+1}$.
The automorphism $\phi_{j+1}$ is represented by a homeomorphism of $\Sigma$
whose support is contained in regions bounded by such loops. Hence the expressions are equal.
It follows that $\gamma^j$ and $\gamma^{j+1}$ are $2K$-equivalent.

So to prove the Sublemma it suffices to show that any automorphism of $S$ can be expressed
(up to inner automorphism) as a product of automorphisms $\phi_i$ with the property above. 

The hypothesis that we may compose $\phi$ by an inner automorphism means that we need
only consider the image of $\phi$ in the mapping class group of $\Sigma$.
It is well-known since Dehn \cite{Dehn_reference} that the 
mapping class group of a closed oriented
surface $\Sigma$ of genus $g$ is generated by twists in a finite standard set
of curves, each of which intersects at most two of the $\alpha_i,\beta_i$
essentially; see Figure~\ref{standard_twists}.

\begin{figure}[htpb]
\labellist
\small\hair 2pt
\endlabellist
\centering
\includegraphics[scale=0.4]{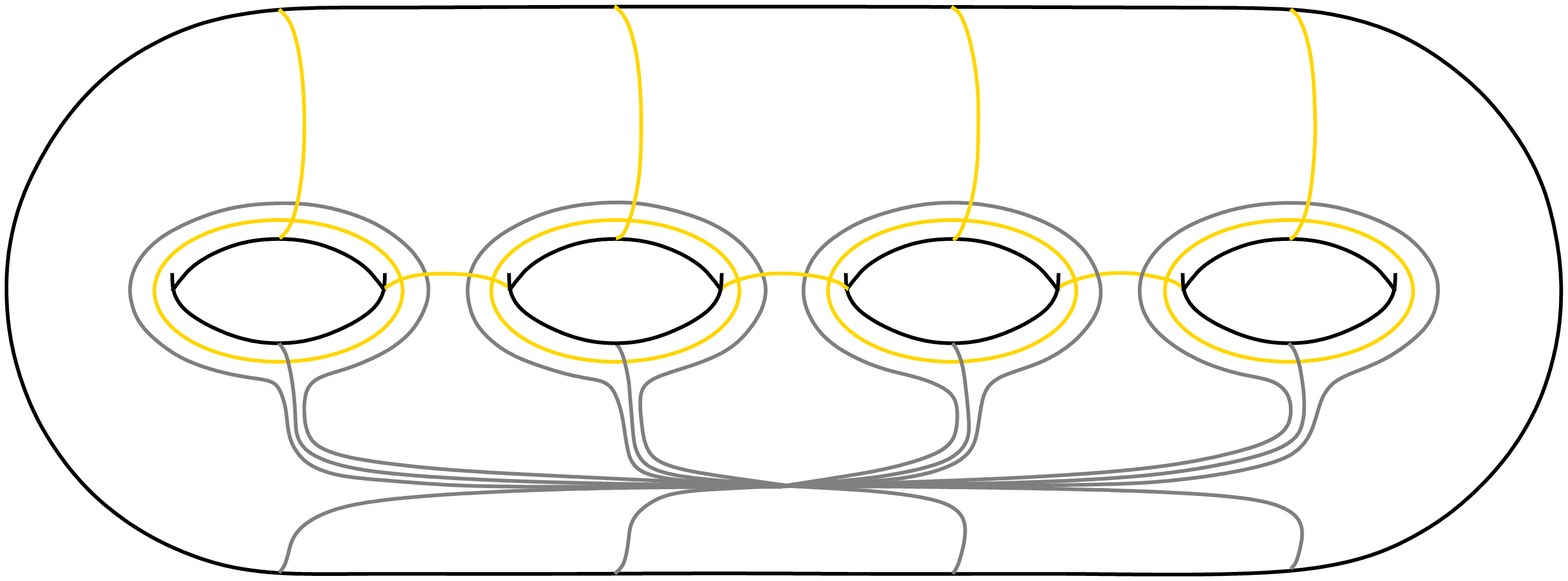}
\caption{A standard set of $3g-1$ simple curves, in yellow. Dehn twists in these curves generate the
mapping class group of $\Sigma$.} \label{standard_twists}
\end{figure}

So write
$\phi = \tau_1 \tau_2 \cdots \tau_m$ where the $\tau_i$ are all standard generators. 
Now define
$$\phi_j = \tau_1 \tau_2 \cdots \tau_{j-1} \tau_j \tau_{j-1}^{-1} \cdots \tau_1^{-1}$$
We have 
$$\phi_j \phi_{j-1} \cdots \phi_1 = \tau_1 \tau_2 \cdots \tau_j$$
Moreover, each $\phi_j$ is a Dehn twist in a curve which is the image of a standard curve under
$\phi_{j-1} \cdots \phi_1$, and therefore intersects 
$\alpha_i^{j-1},\beta_i^{j-1}$ essentially for at most $2$ (consecutive) indices $i$. 
This completes the proof of the Sublemma (and shows, in fact, that we can take $K=4$).
\end{proof}

We now complete the proof of the Lemma. As observed by Stallings (see e.g. \cite{Stallings}), 
a nontrivial map $f:\Sigma \to X$ from a closed, oriented
surface to a wedge of circles factors (up to homotopy) through a {\em pinch} in the following
sense. Make $f$ transverse to some edge $e$ of $X$, and look at the preimage
$\Gamma$ of a regular value of $f$ in $e$. After homotoping inessential loops of $\Gamma$ off $e$,
we may assume that for some edge $e$ and some regular value, the preimage $\Gamma$ contains
an embedded essential loop $\delta$. 

There are two cases to consider. In the first case,
$\delta$ is nonseparating. In this case, let $\phi$ be an automorphism which takes $\alpha_1$ to
the free homotopy class of $\delta$. 
Then $\gamma$ and $\phi_*(\gamma)$ are $K$-equivalent by the Sublemma. However,
since $f(\delta)$ is homotopically trivial in $X$, there is an identity $[\phi_*\alpha_1,\phi_*\beta_1] = \id$
and therefore $\phi_*(\gamma)$ has length $1$ shorter than $\gamma$.

In the second case, $\phi$ is separating, and we can let $\phi$ be an automorphism which takes
the free homotopy class of $[\alpha_1,\beta_1]\cdots[\alpha_j,\beta_j]$ to $\delta$. Again,
by the Sublemma, $\gamma$ and $\phi_*(\gamma)$ are $K$-equivalent. But now $\phi_*(\gamma)$
contains a subarc of length $j$ with both endpoints at $\id$, so we may write it as a product
of two loops at $\id$, each of length shorter than that of $\gamma$.

By induction, $\gamma$ is $K$-equivalent to the trivial loop, and we are done.
\end{proof}

We are now in a position to prove our first main theorem.

\begin{simple_thm}
Let $G$ be a finitely presented group. Then $C_S(G')$ is large scale simply connected.
\end{simple_thm}
\begin{proof}
Let $W$ be a smooth $4$-manifold (with boundary) satisfying $\pi_1(W) = G$. If
$G = \langle T \; | \; R \rangle$ is a finite presentation, we can build $W$ as
a handlebody, with one $0$-handle, one $1$-handle for every generator in $T$, and
one $2$-handle for every relation in $R$. If $r_i \in R$ is a relation, let $D_i$ be
the cocore of the corresponding $2$-handle, so that $D_i$ is a properly embedded
disk in $W$. Let $V \subset W$ be the union of the $0$-handle and the $1$-handles.
Topologically, $V$ is homotopy equivalent to a wedge of circles.
By the definition of cocores, the complement of $\cup_i D_i$ in $W$
deformation retracts to $V$. See e.g. \cite{Kirby_4_manifolds}, Chapter~1 for an introduction 
to handle decompositions of $4$-manifolds.

Given $\gamma$ a loop in $C_S(G')$, translate it by left multiplication so that it passes
through $\id$. As before, let $\Sigma$ be a closed oriented marked surface, and
$f:\Sigma \to W$ a map representing $\gamma$.

Since $G$ is finitely presented, $H_2(G;\Z)$ is finitely generated. Choose finitely many
closed oriented surfaces $S_1,\cdots, S_r$ in $W$ which generate $H_2(G;\Z)$. 
Let $K'$ be the supremum of the genus of the $S_i$.
We can choose a basepoint on each $S_i$, and maps to $W$ which are basepoint preserving.
By tubing $\Sigma$ repeatedly to copies of the $S_i$ with either orientation, we obtain
a new surface and map $f':\Sigma' \to W$ representing a loop $\gamma'$ such that
$f'(\Sigma')$ is null-homologous in $W$, and $\gamma'$ is $K'$-equivalent to $\gamma$
(note that $K'$ depends on $G$ but not on $\gamma$). 

Put $f'$ in general position
with respect to the $D_i$ by a homotopy. Since $f'(\Sigma')$ is null-homologous,
for each proper disk $D_i$, the signed intersection number vanishes:
$D_i \cap f'(\Sigma') = 0$.
Hence $f'(\Sigma)\cap D_i = P_i$ is a finite, even number of points which can be partitioned into
two sets of equal size corresponding to the local intersection number of $f'(\Sigma')$
with $D_i$ at $p \in P_i$. 

Let $p,q \in P_i$ have opposite signs, and let $\mu$ be an
embedded path in $D_i$ from $f'(p)$ to $f'(q)$. Identifying $p$ and $q$ implicitly with their
preimages in $\Sigma'$, let $\alpha$ and $\beta$ be arcs in
$\Sigma'$ from the basepoint to $(f')^{-1}p$ and $(f')^{-1}q$. 
Since $\mu$ is contractible, there is a neighborhood of $\mu$ in $D_i$ on which
the normal bundle is trivializable. Hence, since
$f'(\Sigma')$ and $D_i$ are transverse, we can find a neighborhood $U$ of $\mu$ in $W$
disjoint from the other $D_j$, and co-ordinates on $U$ satisfying
\begin{enumerate}
\item{$D_i \cap U$ is the plane $(x,y,0,0)$}
\item{$\mu \cap U$ is the interval $(t,0,0,0)$ for $t \in [0,1]$}
\item{$f'(\Sigma')\cap U$ is the union of the planes $(0,0,z,w)$ and $(1,0,z,w)$}
\end{enumerate}
Let $A$ be the annulus consisting of points $(t,0,\cos(\theta),\sin(\theta))$ where
$t \in [0,1]$. Then $A$ is disjoint from $D_i$ and all the other $D_j$, and we
can tube $f'(\Sigma')$ with $A$ to reduce the number of intersection points of $f'(\Sigma')$
with $\cup_i D_i$, at the cost of raising the genus by $1$. Technically, we remove the
disks $(f')^{-1}(0,0,s\cos(\theta),s\sin(\theta))$ and $(f')^{-1}(1,0,s\cos(\theta),s\sin(\theta))$
for $s \in [0,1]$ from $\Sigma'$, and sew in a new annulus which we map homeomorphically to $A$.
The result is $f'':\Sigma'' \to W$ with two fewer
intersection points with $\cup_i D_i$. 
This has the effect of adding a new (trivial) edge to the start of $\gamma'$, which is the commutator
of the elements represented by the core of $A$ and the loop $f'(\alpha)*\mu*f'(\beta)$.
Let $\gamma''$ denote this resulting loop, and observe that $\gamma''$ is $1$-equivalent to $\gamma'$.
After finitely many operations of this kind, we obtain $f''':\Sigma''' \to W$ corresponding to
a loop $\gamma'''$ which is $\max(1,K')$-equivalent to $\gamma$, such that
$f'''(\Sigma''')$ is disjoint from $\cup_i D_i$.

After composing with a deformation retraction, we may assume $f'''$ maps $\Sigma'''$ into $V$.
Let $F = \pi_1(V)$, and let $\rho:F \to G$ be the homomorphism induced by the inclusion
$V \to W$. There is a loop $\gamma^F$ in $C_S(F')$ corresponding to $f'''$ such that
$\rho_*(\gamma^F) = \gamma'''$ under the obvious simplicial map $\rho_*:C_S(F') \to C_S(G')$.
By Lemma~\ref{free_group_case}, the loop 
$\gamma^F$ is $K$-equivalent to a trivial loop in $C_S(F')$. Pushing forward the sequence
of intermediate loops by $\rho_*$ shows that $\gamma'''$ is $K$-equivalent to a trivial loop
in $C_S(G')$. Since $\gamma$ was arbitrary, we are done.
\end{proof}

\begin{remark}
A similar, though perhaps more combinatorial argument could be made working
directly with $2$-complexes in place of $4$-manifolds.
\end{remark}

In words, Theorem~A says that for $G$ a finitely presented group, all relations
amongst the commutators of $G$ are consequences of relations involving only
boundedly many commutators.

The next example shows that the size of this bound depends on $G$:

\begin{example}
Let $\Sigma$ be a closed surface of genus $g$, and $G = \pi_1(\Sigma)$.
If $\gamma$ is a loop in $C_S(G)$ through the origin, and $f:\Sigma' \to \Sigma$
is a corresponding map of a closed surface, then the homology class of $\Sigma'$
is trivial unless the genus of $\Sigma'$ is at least as big as that of $\Sigma$.
Hence the loop in $C_S(G)$ of length $g$ corresponding to the relation
in the ``standard'' presentation of $\pi_1(\Sigma)$ is not $K$-equivalent to the
trivial loop whenever $K < g$.
\end{example}

In light of Theorem~A, it is natural to ask the following question:

\begin{question}
Let $G$ be a finitely presented group. Is $C_S(G')$ large scale $k$-connected
for all $k$?
\end{question}

\begin{remark}
Laurent Bartholdi has pointed out that for $F$ a finitely generated free group,
there is a confluent, Noetherian rewriting system for $F'$, with rules of bounded
length, which puts every word in $F'$ over generators $S$ into normal form (with
respect to a ``standard'' free generating set for $F'$). By results
of Groves (\cite{Groves}) this should imply that $C_S(F')$ is large scale $k$-connected for all $k$,
but we have not verified this implication carefully. In any case, it gives
another more algebraic proof of Lemma~\ref{free_group_case}.
\end{remark}

\section{Word-hyperbolic groups}\label{hyperbolic_group_section}

In this section we specialize to the class of {\em word-hyperbolic groups}.
See \cite{Gromov_hyperbolic} for more details.

\begin{definition}
A path metric space $X$ is {\em $\delta$-hyperbolic} for some $\delta \ge 0$ if for
every geodesic triangle $abc$, and every point $p$ on the edge $ab$, there is
$q \in ac \cup bc$ with $d_X(p,q) \le \delta$. In other words, the $\delta$ neighborhood of
any two sides of a geodesic triangle contains the third side.
\end{definition}

\begin{definition}
A group $G$ is {\em word-hyperbolic} if there is a finite generating set $T$ for $G$ such that
$C_T(G)$ is $\delta$-hyperbolic as a path metric space, for some $\delta$.
\end{definition}

\begin{example}
Finitely generated free groups are word-hyperbolic. The fundamental group of a closed surface
with negative Euler characteristic is word-hyperbolic. Discrete cocompact groups of isometries
of hyperbolic $n$-space are word-hyperbolic.
\end{example}

To rule out some trivial examples, one makes the following:

\begin{definition}
A word-hyperbolic group is {\em elementary} if it has a cyclic subgroup of finite index,
and {\em nonelementary} otherwise.
\end{definition}

The main theorem we prove in this section concerns the geometry of $C_S(G')$ at infinity,
where $G$ is a nonelementary word-hyperbolic group. For the sake of brevity we restrict attention
to torsion-free $G$, though this restriction is not logically necessary; see Remark~\ref{torsion_remark}.

\begin{ennd_thm}
Let $G$ be a torsion-free nonelementary word-hyperbolic group. Then $C_S(G')$ is one-ended; 
i.e. for any $r>0$ there is an $R\ge r$ such that any two points in $C_S(G')$ at distance
at least $R$ from $\id$ can be joined by a path which does not come closer than distance $r$
to $\id$.
\end{ennd_thm}

We will estimate distance to $\id$ in $C_S(G')$ using quasimorphisms, as indicated in
\S~\ref{definition_section}. Hyperbolic groups admit a rich family of quasimorphisms.
Of particular interest to us are the {\em Epstein-Fujiwara counting quasimorphisms},
introduced in \cite{Epstein_Fujiwara}, generalizing a construction due to Brooks \cite{Brooks}
for free groups.

Fix a word-hyperbolic group $G$ and a finite generating set $T$. Let $C_T(G)$ denote the
Cayley graph of $G$ with respect to $T$. Let $\sigma$ be an oriented simplicial path in
$C_T(G)$. A {\em copy} of $\sigma$ is a translate $g\cdot \sigma$ for some $g \in G$.
If $\gamma$ is an oriented simplicial path in $C_T(G)$, let $|\gamma|_\sigma$
denote the maximal number of disjoint copies of $\sigma$ contained in $\gamma$.
For $g \in G$, define
$$c_\sigma(g) = d(\id,g) - \inf_\gamma(\length(\gamma) - |\gamma|_\sigma)$$
where the infimum is taken over all directed paths $\gamma$ in $C_T(G)$ from
$\id$ to $g$, and $d(\cdot,\cdot)$ denotes distance in $C_T(G)$.

\begin{definition}[Epstein-Fujiwara]
A {\em counting quasimorphism} on $G$ is a function of the form
$$h_\sigma(g):=c_\sigma(g) - c_{\sigma^{-1}}(g)$$
where $\sigma^{-1}$ denotes the same simplicial path as $\sigma$ with the opposite
orientation.
\end{definition}

Since $|\gamma|_\sigma$ 
takes discrete values, the infimum is realized in the definition of $c_\sigma$.
A path $\gamma$ for which 
$$c_\sigma(g) = d(\id,g) - \length(\gamma) + |\gamma|_\sigma$$ 
is called a {\em realizing path} for $g$. Realizing paths exist, and satisfy the
following geometric property:

\begin{lemma}[Epstein-Fujiwara, \cite{Epstein_Fujiwara} Prop.~2.2]\label{uniform_quasigeodesic_lemma}
Any realizing path for $g$ is a $K,\epsilon$-quasigeodesic in $C_T(G)$, where
$$K = \frac {\length(\sigma)} {\length(\sigma) -1}, \quad \text{ and } \quad 
\epsilon = \frac {2 \cdot \length(\sigma)} {\length(\sigma) - 1}$$
\end{lemma}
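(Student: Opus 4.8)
The plan is to establish the quasigeodesic property directly from the minimality built into the definition of a realizing path; in fact hyperbolicity of $G$ plays no role here, so the argument is purely combinatorial about the counting function $c_\sigma$. Write $L = \length(\sigma)$. Since $\gamma$ is an edge-path, the bound $d(x,y) \le \length(\gamma_{xy})$ for any subpath $\gamma_{xy}$ of $\gamma$ joining two of its vertices $x,y$ is automatic, so the only thing to prove is an inequality of the form $\length(\gamma_{xy}) \le K\, d(x,y) + \epsilon$; once this is known for pairs of vertices, the statement for arbitrary points of $\gamma$ follows at the cost of enlarging $\epsilon$ by a bounded amount, which is already absorbed into the constants claimed.

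First I would fix two vertices $x,y$ on $\gamma$, let $\gamma_{xy}$ be the subpath between them of length $\ell$, and form a competitor directed path $\hat\gamma$ from $\id$ to $g$ by excising $\gamma_{xy}$ and splicing in a geodesic $\eta$ from $x$ to $y$, so that $\length(\hat\gamma) = \length(\gamma) - \ell + d(x,y)$. The key estimate is then a lower bound on the number of disjoint copies of $\sigma$ that survive this surgery. Fixing a maximal disjoint family $\sigma_1,\dots,\sigma_N$ realizing $|\gamma|_\sigma = N$, each copy whose interior misses $\gamma_{xy}$ remains intact in $\hat\gamma$, and the remaining copies are of two types: those contained in $\gamma_{xy}$, of which there are at most $\lfloor \ell/L \rfloor$ since they are disjoint subpaths of length $L$, and those straddling one of the two endpoints $x,y$, of which there are at most two. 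Hence $|\hat\gamma|_\sigma \ge N - \lfloor \ell/L\rfloor - 2$ (new copies possibly created inside $\eta$ only help).

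Now I would feed this into the defining minimality of the realizing path. Since $\gamma$ realizes the infimum of $\length(\cdot) - |\cdot|_\sigma$ over directed paths from $\id$ to $g$, we have $\length(\gamma) - N \le \length(\hat\gamma) - |\hat\gamma|_\sigma$; substituting the two estimates above and cancelling the $\length(\gamma)$ and $-N$ terms yields $\ell - \lfloor \ell/L\rfloor \le d(x,y) + 2$. Using $\ell - \lfloor \ell/L\rfloor \ge \ell(1 - 1/L)$, this rearranges to $\ell \le \frac{L}{L-1}\, d(x,y) + \frac{2L}{L-1}$, which is precisely the asserted $(K,\epsilon)$-quasigeodesic inequality with $K = \length(\sigma)/(\length(\sigma)-1)$ and $\epsilon = 2\length(\sigma)/(\length(\sigma)-1)$.

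The one genuinely delicate point is the combinatorial bookkeeping of destroyed copies: one must be careful when $\gamma$ revisits vertices (so "disjoint" should be read as having disjoint interiors, or as edge-disjoint), and one must check that each copy of $\sigma$ is charged to at most one of the three categories (contained in $\gamma_{xy}$, straddling $x$, straddling $y$), including the degenerate case $\ell < L$ in which $\gamma_{xy}$ may sit inside the interior of a single long copy — there it is charged only to one straddling category and the bound $\lfloor \ell/L\rfloor + 2 \ge 1$ still suffices. Everything else is routine arithmetic.
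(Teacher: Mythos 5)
Your proof is correct: the surgery of replacing a subpath $\gamma_{xy}$ by a geodesic, bounding the destroyed copies of $\sigma$ by $\lfloor \ell/\length(\sigma)\rfloor + 2$, and invoking minimality of $\length(\cdot)-|\cdot|_\sigma$ yields exactly $\ell \le \tfrac{L}{L-1}d(x,y) + \tfrac{2L}{L-1}$, recovering the stated constants. Note that the paper itself gives no proof of this lemma --- it is quoted from Epstein--Fujiwara, Prop.~2.2 --- and your argument is essentially their original one; the only caveat, which you already flag, is that the exact constants pertain to the vertex-to-vertex form of the quasigeodesic inequality, with arbitrary points costing only a bounded additive adjustment.
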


Moreover,

\begin{lemma}[Epstein-Fujiwara, \cite{Epstein_Fujiwara} Prop.~2.13]\label{uniform_defect}
Let $\sigma$ be a path in $C_T(G)$ of length at least $2$. Then there is a constant
$K(\delta)$ (where $T$ is such that $C_T(G)$ is $\delta$-hyperbolic as a metric space)
such that $D(h_\sigma)\le K(\delta)$.
\end{lemma}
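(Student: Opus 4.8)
The plan is to reduce the bound on $D(h_\sigma)$ to the stability of quasigeodesics in a $\delta$-hyperbolic space, exploiting that the quasigeodesic constants supplied by Lemma~\ref{uniform_quasigeodesic_lemma} are \emph{uniform} once $\length(\sigma) \ge 2$. Indeed, for $\length(\sigma) \ge 2$ one has $K = \length(\sigma)/(\length(\sigma)-1) \le 2$ and $\epsilon = 2\length(\sigma)/(\length(\sigma)-1) \le 4$, so every realizing path (for $\sigma$ or for $\sigma^{-1}$) is a $(2,4)$-quasigeodesic regardless of $\sigma$. By the Morse stability lemma in $\delta$-hyperbolic spaces there is a constant $M = M(\delta)$, depending only on $\delta$, so that each realizing path lies in the $M$-neighborhood of the geodesic joining its endpoints, and conversely. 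This single uniform constant is what will ultimately make the defect bound independent of $\sigma$.

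First I would rewrite $h_\sigma$ to isolate the counting. Writing $F_\sigma(g) = \inf_\gamma(\length(\gamma) - |\gamma|_\sigma)$ for the infimum in the definition of $c_\sigma$, we have $c_\sigma(g) = d(\id,g) - F_\sigma(g)$, and hence $h_\sigma = F_{\sigma^{-1}} - F_\sigma$: the word-length terms cancel, and only the optimized copy-counts survive. Concatenating realizing paths shows $F_\sigma$ is subadditive, so $P_\sigma := F_\sigma(g) + F_\sigma(h) - F_\sigma(gh) \ge 0$, and the defect inequality we want is exactly $|P_{\sigma^{-1}} - P_\sigma| \le K(\delta)$. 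It is worth stressing that $P_\sigma$ itself is \emph{not} bounded (for instance $gh = \id$ forces $P_\sigma = F_\sigma(g) + F_\sigma(g^{-1})$, which grows with $d(\id,g)$); the entire point is that the unbounded part of $P_\sigma$ is the same for $\sigma$ and for $\sigma^{-1}$.

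The geometric heart is to pin down this common unbounded part. Fix $g,h$ and consider the geodesic triangle with vertices $\id, g, gh$, of side lengths $d(\id,g)$, $d(g,gh) = d(\id,h)$ and $d(\id,gh)$. By $\delta$-thinness this triangle is modelled on a tripod with centre $w$ and legs of lengths $\alpha,\beta,\gamma$ (the Gromov products at the three vertices), each side lying within $\delta$ of the union of the other two. The side $[\id,g]$ decomposes at $w$ into an $\alpha$-part (fellow-travelling $[\id,gh]$) and a $\beta$-part (fellow-travelling $[g,gh]$), and similarly for the other sides; using the uniform fellow-travelling constant $M(\delta)$ I would transport near-optimal counting configurations along these shared segments. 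The decisive observation is a bookkeeping of orientations. The $\alpha$-leg is traversed with the \emph{same} orientation by the paths for $g$ and for $gh$ (and is absent from the path for $h$), so it contributes $0$ to $P_\sigma$, and likewise for the $\gamma$-leg. The $\beta$-leg, by contrast, is traversed in \emph{opposite} directions by the path for $g$ (as $\id \to g$) and by the path for $h$ (as $g \to gh$), and is absent from the path for $gh$; since reversing orientation interchanges the roles of $\sigma$ and $\sigma^{-1}$, the (possibly large) contribution of the $\beta$-leg to $P_\sigma$ is exactly matched by the contribution of the \emph{same} geometric leg to $P_{\sigma^{-1}}$. Hence every leg cancels in the difference $P_{\sigma^{-1}} - P_\sigma$.

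What remains after this cancellation is supported near the centre $w$, and this is the step I expect to be the main obstacle. The difficulty is twofold: the realizing paths are only quasigeodesics fellow-travelling the geodesic sides within $M(\delta)$, so a copy of $\sigma$ on one path need not be an exact subpath of another; and, more seriously, a single copy of $\sigma$ may be very long and straddle $w$, so it cannot be confined to a bounded region. The resolution is that disjointness caps the straddling: at most one copy in any given path can contain $w$, so only $O(1)$ copies per path fail to lie entirely within a single leg, and allowing for the fellow-travelling slack, at most $O(\delta)$ disjoint copies can meet the $M(\delta)$-ball about $w$ (a $(2,4)$-quasigeodesic spends length $O(\delta)$ in such a ball, which meets only $O(\delta)$ disjoint copies). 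Quantifying these boundary corrections — matching near-optimal configurations across fellow-travelling paths and bounding the uncancelled copies near $w$ purely in terms of $\delta$ — yields $|P_{\sigma^{-1}} - P_\sigma| \le K(\delta)$, and therefore $D(h_\sigma) \le K(\delta)$ with $K(\delta)$ depending only on $\delta$.
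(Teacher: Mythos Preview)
The paper does not give its own proof of this lemma: it is stated as a citation to \cite{Epstein_Fujiwara}, Proposition~2.13, and used as a black box. So there is nothing in the present paper to compare your argument against.

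That said, your outline follows essentially the strategy of the original Epstein--Fujiwara proof: one uses Lemma~\ref{uniform_quasigeodesic_lemma} to get uniform $(2,4)$-quasigeodesic constants for realizing paths once $\length(\sigma)\ge 2$, applies Morse stability to put all three realizing paths in a uniform neighborhood of a $\delta$-thin triangle, and then does a matching of $\sigma$-copies along the paired legs, with only a bounded discrepancy near the tripod centre. Your orientation bookkeeping on the $\beta$-leg --- that its contribution to $P_\sigma$ and to $P_{\sigma^{-1}}$ coincide because reversing traversal swaps $\sigma$ and $\sigma^{-1}$ --- is the right conceptual point, and is exactly why the antisymmetrization $h_\sigma = c_\sigma - c_{\sigma^{-1}}$ has bounded defect while $c_\sigma$ alone need not.

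The one place your sketch is genuinely soft is the transport step: when two realizing paths fellow-travel within $M(\delta)$, you assert that near-optimal $\sigma$-configurations on one can be matched against those on the other up to $O(\delta)$ error. This is not automatic --- a copy of $\sigma$ is a specific simplicial path, and a nearby quasigeodesic need not contain it --- and in Epstein--Fujiwara it is handled not by moving copies between paths but by a more careful accounting of $c_\sigma$ along the legs directly. Your intuition is correct, but if you want a self-contained proof rather than a citation, that step needs to be made precise.
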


Counting quasimorphisms are very versatile, as the following lemma shows:
\begin{lemma}\label{separation_lemma}
Let $G$ be a torsion-free, nonelementary word-hyperbolic group. 
Let $g_i$ be a finite collection of elements of $G$.
There is a commutator $s \in G'$ and a quasimorphism $\phi$ on $G$ with the following
properties:
\begin{enumerate}
\item{$|\phi(g_i)|=0$ for all $i$}
\item{$|\phi(s^n) -n| \le K_1$ for all $n$, where $K_1$ is a constant which depends only
on $G$.}
\item{$D(\phi) \le K_2$ where $K_2$ is a constant which depends only on $G$}
\end{enumerate}
\end{lemma}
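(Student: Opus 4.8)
The plan is to construct $\phi$ as an Epstein--Fujiwara counting quasimorphism $h_\sigma$ for a carefully chosen path $\sigma$, and to choose $\sigma$ long enough that it is ``invisible'' to the finitely many elements $g_i$ but ``visible'' to a suitable commutator $s$. First I would pass to geodesic words: fix the finite generating set $T$ with $C_T(G)$ $\delta$-hyperbolic, and for each $g_i$ fix a geodesic representative word $w_i$, of length at most some $L = \max_i d(\id,g_i)$. The key observation is that if $\sigma$ is a reduced (geodesic) path in $C_T(G)$ whose length exceeds $2L+1$, then no copy of $\sigma$ or $\sigma^{-1}$ can appear inside a geodesic word of length $\le L$, nor inside the concatenation of two such words after reduction — so if we could arrange that realizing paths for the $g_i$ are genuine geodesics we would get $c_\sigma(g_i) = c_{\sigma^{-1}}(g_i) = 0$ directly. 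Since realizing paths are only quasigeodesics (Lemma~\ref{uniform_quasigeodesic_lemma}), what is actually true is that $|\phi(g_i)|$ is bounded by a constant depending only on $L$ and $\delta$; to get the exact equality $|\phi(g_i)| = 0$ demanded by (1), I would instead only require $\length(\sigma)$ large compared to this a priori bound, and then \emph{modify} $\phi$ by subtracting a genuine homomorphism (or, more simply, observe that we are free to replace $\phi$ by $\phi - \phi(g_i)$ on the cyclic pieces — but since there are several $g_i$ this does not literally work). The cleanest route is: choose $\sigma$ so long that $h_\sigma(g_i)=0$ on the nose, using that for a \emph{specific} element a realizing path can be taken to lie in a bounded neighborhood of a geodesic and a sufficiently long $\sigma$ simply cannot be fit in; this is exactly the mechanism Epstein--Fujiwara and Calegari use to separate finitely many classes.

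Next I would produce the commutator $s$ together with the path $\sigma$ simultaneously. Since $G$ is nonelementary and torsion-free, it contains a quasiconvex free subgroup of rank $2$, hence an element $a$ of infinite order whose axis is a genuine quasigeodesic; take a long subword $\sigma$ of (a geodesic representative of) a high power $a^N$, chosen disjoint — as a set of edges along the axis — from translates that could appear in the $g_i$. Standard hyperbolic ping-pong lets us find $b$ so that $s := [a,b]$ (or $[a^N, b]$) has an axis that fellow-travels the axis of $a$ over arbitrarily long stretches, so that the geodesic word for $s^n$ contains on the order of $n$ disjoint copies of $\sigma$ and no copies of $\sigma^{-1}$; this yields $h_\sigma(s^n) = n + O(1)$, i.e. property (2) with $K_1$ depending only on $G$ (via $\delta$, $\length(\sigma)$, and the quasigeodesic constants, all of which can be fixed in terms of $G$ alone once we fix how $a,b$ are chosen). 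Property (3), $D(\phi) \le K_2$, is then immediate from Lemma~\ref{uniform_defect}, since $\length(\sigma)\ge 2$ forces $D(h_\sigma) \le K(\delta)$, a constant depending only on $G$.

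The main obstacle I anticipate is the tension between wanting $\length(\sigma)$ \emph{large} (so that $\sigma$ cannot be seen by the fixed $g_i$, giving the exact $0$ in (1)) and wanting the constants $K_1, K_2$ to depend \emph{only on $G$} and not on the collection $\{g_i\}$. Lemma~\ref{uniform_defect} saves us for $K_2$ — the defect bound $K(\delta)$ is genuinely uniform over all $\sigma$ of length $\ge 2$. For $K_1$ the point is that once $\sigma$ is a subword of a power of the fixed element $a$ (chosen in terms of $G$), the count $|\gamma|_\sigma$ along the geodesic word for $s^n$ is controlled by the quasigeodesic constants of Lemma~\ref{uniform_quasigeodesic_lemma} and by $\delta$, all independent of the $g_i$; the $g_i$ only influence \emph{how long} we must take $\sigma$, not the quality of the counting estimate for $s$. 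So the delicate bookkeeping is: first fix $a, b \in G$ and the ``template'' quasigeodesic for $s$ depending on $G$ only; then, given the $g_i$, choose $N$ (hence $\sigma$, a length-$N$ subword of the template) large enough to be invisible to the $g_i$; and finally check that enlarging $N$ only \emph{improves} the estimate in (2), so $K_1$ can be taken as the bound valid for all large $N$. I would carry out the ping-pong construction of $a,b$ and the fellow-traveling estimate for $s^n$ first, then the invisibility argument for the $g_i$, then collect constants.
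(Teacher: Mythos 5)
Your overall strategy --- an Epstein--Fujiwara counting quasimorphism $h_\sigma$ with $\sigma$ chosen so long that no copy of $\sigma^{\pm 1}$ can fit inside a realizing path for any $g_i$ (so $c_\sigma(g_i)=c_{\sigma^{-1}}(g_i)=0$ exactly, by the uniform quasigeodesic constants of Lemma~\ref{uniform_quasigeodesic_lemma}), together with the uniform defect bound of Lemma~\ref{uniform_defect} --- is the same as the paper's, and your handling of properties (1) and (3) is fine. The gap is in property (2), i.e.\ in your choice of the pair $(s,\sigma)$. You take $\sigma$ to be a long subword of a power $a^N$ of a fixed element $a$ and $s=[a,b]$ or $[a^N,b]$, and assert that paths for $s^n$ contain on the order of $n$ disjoint copies of $\sigma$ and \emph{no} copies of $\sigma^{-1}$. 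The second assertion is unjustified and should not be expected for this choice: each period of $s^n=(a^N b a^{-N} b^{-1})^n$ traverses an anti-aligned translate of the $a$-axis along its $a^{-N}$ stretch, which is precisely a copy of $\sigma^{-1}$; so the natural paths from $\id$ to $s^n$ carry roughly $n$ disjoint copies of $\sigma^{-1}$ as well as of $\sigma$, and since $h_\sigma=c_\sigma-c_{\sigma^{-1}}$ (and the infimum defining $c_{\sigma^{-1}}$ is free to use a slightly longer path that collects these copies), there is no reason for $h_\sigma(s^n)$ to grow linearly. Relatedly, even the positive count is not established: fellow-traveling does not produce \emph{exact} copies of $\sigma$ inside geodesics or realizing paths, and the per-period length surplus of your spelling path over a geodesic can exceed the per-period bonus of one copy, so $c_\sigma(s^n)\ge n-O(1)$ does not follow from what you say.

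The paper avoids both problems by reversing the roles: it first chooses the commutator $s$ with translation length huge compared to the $g_i$ (possible since $G$ is nonelementary, hence contains quasigeodesically embedded free subgroups), and then takes $\sigma$ to be a fundamental domain for $s^N$ acting on an honest geodesic axis $L$ of $s^N$, where $N=N(G)$ is the uniform power for which geodesic axes exist. With that choice the geodesic along $L$ from $p$ to $s^{nN}p$ literally contains the disjoint copies $\sigma, s^N\sigma,\dots$, and the argument of Calegari--Fujiwara (Theorem~$\mathrm{A}'$ of \cite{Calegari_Fujiwara}) shows that realizing paths for positive powers $s^n$ contain no copies of $\sigma^{-1}$; this is exactly where torsion-freeness enters, since it guarantees $s$ is not conjugate to $s^{-1}$ and hence rules out anti-aligned copies. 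One gets $|h_\sigma(s^n)-\lfloor n/N\rfloor|$ bounded by a constant depending only on $G$ and sets $\phi=N h_\sigma$. The fact that your proposal never invokes the torsion-free hypothesis is a symptom of the missing step: that hypothesis exists precisely to control the $\sigma^{-1}$ count. To repair your version you would need $\sigma$ to span a full period of $s$ itself (not of $a$), plus an anti-alignment argument of the Calegari--Fujiwara type --- at which point you have reproduced the paper's proof.
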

\begin{proof}
Fix a finite generating set $T$ so that $C_T(G)$ is $\delta$-hyperbolic. There
is a constant $N$ such that for any nonzero $g \in G$, the power $g^N$ fixes an
axis $L_g$ (\cite{Gromov_hyperbolic}). Since $G$ is nonelementary, it contains quasigeodesically
embedded copies of free groups, of any fixed rank. So we can find a commutator $s$
whose translation length (in $C_T(G)$) is as big as desired. In particular,
given $g_1,\cdots,g_j$ we choose $s$ with $\tau(s) \gg \tau(g_i)$ for all $i$.
Let $L$ be a geodesic axis for $s^N$, and let $\sigma$ be a fundamental domain
for the action of $s^N$ on $L$. Since $|\sigma| = N\tau(s) \gg \tau(g_i)$, 
Lemma~\ref{uniform_quasigeodesic_lemma}
implies that there are no copies of $\sigma$ or $\sigma^{-1}$ in a realizing path
for any $g_i$. Hence $h_\sigma(g_i)=0$ for all $i$. By Lemma~\ref{uniform_defect},
$D(h_\sigma) \le K(\delta)$. It remains to estimate $h_\sigma(s^n)$.

In fact, the argument of \cite{Calegari_Fujiwara} Theorem~$\text{A}'$ 
(which establishes explicitly an estimate that is 
implicit in \cite{Epstein_Fujiwara}) shows that
for $N$ sufficiently large (depending only on $G$ and not on $s$) no copies
of $\sigma^{-1}$ are contained in any realizing path for $s^n$ with $n$ positive,
and therefore $|h_\sigma(s^n) - \lfloor n/N \rfloor|$ is bounded by a constant
depending only on $G$. The quasimorphism $\phi = N\cdot h_\sigma$ has the desired properties.
\end{proof}

\begin{remark}\label{torsion_remark}
The hypothesis that $G$ is torsion-free is included only to ensure that $s$ is not
conjugate to $s^{-1}$. It is possible to remove this hypothesis by taking slightly
more care in the definition of $s$, using the methods of the proof of Proposition~2 from
\cite{Bestvina_Fujiwara}. We are grateful to the referee for pointing this out.
\end{remark}

We now give the proof of Theorem~B:
\begin{proof}
Let $g,h \in G'$ have commutator length at least $R$. Let 
$g = s_1s_2 \cdots s_n$ and $h = t_1t_2 \cdots t_m$ where $n,m \ge R$ are equal
to the commutator lengths of $g$ and $h$ respectively, and
each $s_i,t_i$ is a commutator in $G$. Let $s$ be a commutator with the properties
described in Lemma~\ref{separation_lemma}
with respect to the elements $g,h$; that is, we want $s$ for which there is a
quasimorphism $\phi$ with $\phi(g)=\phi(h)=0$, with $|\phi(s^n)-n| \le K_1$ for all $n$,
and with $D(\phi) \le K_2$. Let $N \gg R$ be very large. We build a path
in $C_S(G')$ from $g$ to $h$ out of four segments, none of which come too close to
$\id$. 

The first segment is
$$g, gs, gs^2, gs^3, \cdots , gs^N$$
Since $s$ is a commutator, $d(gs^i,\id) \ge R-i$ for any $i$. On the other hand,
$$\phi(gs^i) \ge \phi(g) + \phi(s^i) - D(\phi) \ge i - K_2 - K_1$$
where $K_1,K_2$ are as in Lemma~\ref{separation_lemma} (and do not depend on $g,h,s$).
From Lemma~\ref{7_lipschitz} we can estimate
$$d(gs^i,\id) \ge \frac {\phi(gs^i)} {7D(\phi)} \ge \frac {i - K_2 - K_1} {7K_2}$$
Hence $d(gs^i,\id) \ge R/14K_2 - (K_1+K_2)/7K_2$ for all $i$, so providing $R \gg K_1,K_2$,
the path $gs^i$ never gets too close to $\id$.

The second segment is
$$gs^N = s_1s_2 \cdots s_ns^N, s_2\cdots s_ns^N,\cdots,s^N$$
Note that consecutive elements in this segment are distance $1$ apart in $C_S(G')$,
by Lemma~\ref{left_right_invariant}.
Since $d(gs^N,\id) \ge (N - K_2 - K_1)/7K_2 \gg R$ for $N$ sufficiently large,
we have $$d(s_i\cdots s_ns^N,\id) \gg R$$ for all $i$.

The third segment is
$$s^N, t_ms^N, t_{m-1}t_ms^N,\cdots, t_1t_2\cdots t_ms^N = hs^N$$
and the fourth is
$$hs^N,hs^{N-1},\cdots,hs,h$$
For the same reason as above, neither of these segments gets too close to $\id$.
This completes the proof of the theorem, taking $r = R/14K_2 - (K_1+K_2)/7K_2$.
\end{proof}

\section{Asymptotic dimension}\label{asymptotic_section}

The main point of this section is to make the observation that $G'$ for $G$ as
above is not a quasi-tree, and to restate this observation in terms of asymptotic dimension.
We think it is worth making this restatement explicitly.
The notion of asymptotic dimension is introduced in \cite{Gromov_asymptotic}, p.~32.

\begin{definition}
Let $X$ be a metric space, and $X = \cup_i U_i$ a covering by subsets. For given
$D\ge 0$, the {\em $D$-multiplicity} of the covering is at most $n$ if for any
$x \in X$, the closed $D$-ball centered at $x$ intersects at most $n$ of the $U_i$.

A metric space $X$ has {\em asymptotic dimension at most $n$} if for every $D \ge 0$
there is a covering $X = \cup_i U_i$ for which the diameters of the $U_i$ are uniformly
bounded, and the $D$-multiplicity of the covering is at most $n+1$. The least such $n$
is the {\em asymptotic dimension} of $X$, and we write
$$\asdim(X)=n$$
\end{definition}

If $X$ is a metric space, we say $H_1(X)$ is {\em uniformly generated} if there is
a constant $L$ such that $H_1(X)$ is generated by loops of length at most $L$.
It is clear that if $X$ is large scale $1$-connected, then $H_1(X)$ is uniformly
generated. Fujiwara-Whyte \cite{Fujiwara_Whyte} prove the following theorem:

\begin{theorem}[Fujiwara-Whyte, \cite{Fujiwara_Whyte}, Thm.~0.1]
Let $X$ be a geodesic metric space with $H_1(X)$ uniformly generated. $X$
has $\asdim(X)=1$ if and only if $X$ is quasi-isometric to an unbounded tree.
\end{theorem}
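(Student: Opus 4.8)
The plan is to prove the two implications separately; the implication starting from a tree is routine, and the converse carries all the content. For the easy direction, I would use that asymptotic dimension is a quasi-isometry invariant and that an unbounded tree has asymptotic dimension exactly $1$: it is at most $1$ because at every scale $D$ a tree admits a cover by two families of $D$-separated, uniformly bounded subtrees, and it is not $0$ because a space of asymptotic dimension $0$ is bounded. Thus $X$ quasi-isometric to an unbounded tree forces $\asdim(X)=1$, and the hypothesis on $H_1$ is not needed here (it holds automatically, since trees have trivial $H_1$ and uniform generation is a quasi-isometry invariant).

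For the converse, assume $\asdim(X)=1$ and that $H_1(X)$ is generated by loops of length at most $L$. I would produce the quasi-isometry to a tree by verifying Manning's bottleneck criterion: there is a constant $\Delta$ such that for all $x,y\in X$, if $m$ is the midpoint of a geodesic from $x$ to $y$, then every path from $x$ to $y$ meets the ball of radius $\Delta$ about $m$. Granting this criterion, Manning's theorem yields a quasi-isometry of $X$ onto a tree, which is unbounded because $\asdim(X)\neq 0$. Two auxiliary devices feed the verification. First, $\asdim(X)\le 1$ provides, for each scale $D$, a cover of $X$ by two $D$-separated families of sets of diameter at most $S=S(D)$; its nerve $N_D$ is then a one-dimensional complex, i.e. a graph, and the associated nerve map sends any path in $X$ to a nearby edge-path in $N_D$, with each vertex $v$ indexing a cover set $U_v$ of diameter at most $S$. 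Second, uniform generation of $H_1(X)$ says exactly that every loop bounds a simplicial $2$-chain in the Rips complex of $X$ at a fixed scale $L'=L'(L)$, that is, loops admit fillings of bounded mesh.

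The heart of the proof, and the step I expect to be the main obstacle, is to contradict a failure of the bottleneck property. Suppose it fails: for each $k$ there are $x_k,y_k$ with geodesic midpoint $m_k$ and a path $\gamma_k$ from $x_k$ to $y_k$ that avoids the ball of radius $k$ about $m_k$. Writing $\eta_k$ for the geodesic $[x_k,y_k]$, the loop $\ell_k=\eta_k\cdot\overline{\gamma_k}$ bounds a simplicial filling of mesh $L'$. Now fix a single scale $D$ with $L'\ll D\ll k$ and compose the filling with a simplicial nerve map from the Rips complex at scale $L'$ into $N_D$; such a map exists once $D$ dominates $L'$. Since $N_D$ is a graph, every $2$-simplex of the filling maps to a degenerate simplex, so the image $2$-chain is zero, and because simplicial chain maps commute with the boundary, the image of $\ell_k$ is the \emph{zero} $1$-chain in $N_D$. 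Hence the image of $\eta_k$ equals the image of $\gamma_k$ as $1$-chains; in particular, since $\eta_k$ runs through the vertex $v_k$ indexing a cover set containing $m_k$, the path $\gamma_k$ must also pass through $v_k$ and therefore meet $U_{v_k}$. But $U_{v_k}$ contains $m_k$ and has diameter at most $S\ll k$, so $\gamma_k$ enters the ball of radius $k$ about $m_k$, contradicting its choice. The delicate points, which I would treat carefully, are the construction of a genuinely simplicial nerve map realizing the scale comparison $L'\ll D$ and the verification that uniform generation of $H_1(X)$ supplies a simplicial, not merely homological, filling of $\ell_k$; with these in hand the bottleneck property follows, and the converse is complete.
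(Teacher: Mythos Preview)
The paper does not prove this theorem: it is stated without proof as a black-box citation of Theorem~0.1 from Fujiwara--Whyte \cite{Fujiwara_Whyte}, and is invoked only to deduce the corollary on asymptotic dimension. There is consequently no proof in the paper against which to compare your proposal.

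For what it is worth, your outline is a reasonable strategy for the cited result. The easy direction is standard, and for the converse, reducing to Manning's bottleneck criterion via a nerve map to a graph is natural. One step, however, is more delicate than you indicate. From $f_*(\ell_k)=0$ as a $1$-chain in $N_D$ you infer that the image of $\gamma_k$ must pass through the vertex $v_k$; but equality of $1$-chains does not force equality of supports, since edges incident to $v_k$ in $f_*(\eta_k)$ could in principle cancel. You would need an additional argument here, for example showing that removing $v_k$ (or its star) separates the images of $x_k$ and $y_k$ in $N_D$, or controlling the local degree of $f_*(\eta_k)$ at $v_k$ so that no cancellation occurs. This is fixable, but it is a genuine gap in the sketch as written, not merely a routine detail. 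Your other flagged point, that uniform generation of $H_1(X)$ yields an honest simplicial filling of $\ell_k$ in a fixed-scale Rips complex, is correct once one observes that any singular $2$-chain in $X$ can be subdivided to arbitrarily small mesh.
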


A group whose Cayley graph is quasi-isometric to an unbounded tree has more than
one end (see e.g. Manning \cite{Manning}, especially \S~2.1 and \S~2.2). 
Hence Theorem~A and Theorem~B together imply the following:

\begin{corollary}
Let $G$ be a nonelementary torsion-free word-hyperbolic group. Then
$$\asdim(C_S(G')) \ge 2$$
\end{corollary}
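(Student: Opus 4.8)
The plan is to deduce the lower bound on asymptotic dimension by combining the Fujiwara--Whyte dichotomy with Theorems~A and~B, together with the elementary fact that a one-ended geodesic space cannot be quasi-isometric to an unbounded tree. First I would observe that $C_S(G')$ is a geodesic metric space (it is a connected graph, path-connected since $G'$ is generated by $S$), so Fujiwara--Whyte applies once we know $H_1$ is uniformly generated. By Theorem~A, $C_S(G')$ is large scale simply connected, and as remarked just before the statement of the Fujiwara--Whyte theorem, large scale $1$-connectivity immediately implies $H_1(C_S(G'))$ is uniformly generated; so the hypotheses of their theorem are met.

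Next I would argue by contradiction: suppose $\asdim(C_S(G')) \le 1$. Since $C_S(G')$ is unbounded (for instance because $G'$ is infinite, $G$ being nonelementary, and by Theorem~B the distance to $\id$ is unbounded), the asymptotic dimension is exactly $1$, so by Fujiwara--Whyte $C_S(G')$ is quasi-isometric to an unbounded tree $\mathcal{T}$. An unbounded tree has more than one end --- this is the content of the cited observation of Manning --- and having more than one end is a quasi-isometry invariant of geodesic spaces, so $C_S(G')$ would have more than one end. But Theorem~B asserts that $C_S(G')$ is one-ended, a contradiction. Hence $\asdim(C_S(G')) \ge 2$.

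The only point requiring minor care is the comparison of the two notions of ``number of ends'': Theorem~B's statement is phrased in terms of a coarse connectivity-at-infinity condition (for all $r$ there is $R$ with large spheres connected outside the $r$-ball around $\id$), which is exactly the assertion that the space has one end in the standard coarse sense; and the number of ends of a geodesic space is a quasi-isometry invariant. I expect the main (and really the only) obstacle to be bookkeeping: confirming that Theorem~B's formulation coincides with one-endedness in the sense used in Fujiwara--Whyte and Manning, and that the quasi-isometry $C_S(G') \to \mathcal{T}$ transports the end count. Both are standard, so the proof is essentially a two-line deduction, which is precisely why the authors flag it as an observation worth recording explicitly.
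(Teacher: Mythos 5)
Your proposal is correct and follows essentially the same route as the paper: Theorem~A gives uniform generation of $H_1$, Fujiwara--Whyte then forces a space of asymptotic dimension one to be quasi-isometric to an unbounded tree, which (by the cited observation of Manning) has more than one end, contradicting the one-endedness from Theorem~B. The extra remarks on unboundedness and on matching the two notions of ends are fine but only make explicit what the paper leaves implicit.
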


\section{Acknowledgment}
We would like to thank Koji Fujiwara for some useful conversations.
We would also like to thank the anonymous referee for a careful reading,
and many useful comments. Danny Calegari was partially funded by NSF grant DMS 0707130.

\end{document}